\tikzset{>={Latex[length=4,width=4]}} 
\colorlet{mylightblue}{blue!20}
\colorlet{myblue}{blue!50!black}
\colorlet{mydarkblue}{blue!30!black}
\colorlet{mylightred}{red!10}
\colorlet{myred}{red!50!black}
\colorlet{mydarkred}{red!60!black}
\colorlet{mydarkgreen}{green!30!black}
\tikzset{
midarr/.style={decoration={markings,mark=at position #1 with {\arrow{stealth}}},postaction={decorate}},
midarr/.default=0.5
}
\newtheorem{theorem}{Theorem}[section]
\newtheorem{lemma}[theorem]{Lemma}
\newtheorem{corollary}[theorem]{Corollary}
\newtheorem{definition}[theorem]{Definition}
\definecolor{OliveGreen}{HTML}{3C8031}
\title{Impossibility of latent inner product recovery via rate distortion}
\author{Cheng Mao}
\author{Shenduo Zhang}
\affil{School of Mathematics, Georgia Institute of Technology}
\begin{document}

\maketitle

\begin{abstract}
In this largely expository note, we present an impossibility result for inner product recovery in a random geometric graph or latent space model using the rate-distortion theory. More precisely, suppose that we observe a graph $A$ on $n$ vertices with average edge density $p$ generated from Gaussian or spherical latent locations $z_1, \dots, z_n \in \R^d$ associated with the $n$ vertices. It is of interest to estimate the inner products $\langle z_i, z_j \rangle$ which represent the geometry of the latent points. We prove that it is impossible to recover the inner products if $d \gtrsim n h(p)$ where $h(p)$ is the binary entropy function. This matches the condition required for positive results on inner product recovery in the literature. The proof follows the well-established rate-distortion theory with the main technical ingredient being a lower bound on the rate-distortion function of the Wishart distribution which is interesting in its own right.
\end{abstract}




\section{Introduction}

Random graphs with latent geometric structures comprise an important class of network models used across a broad range of fields~\cite{penrose2003random,hoff2002latent,barthelemy2011spatial}. 
In a typical formulation of such a model, each vertex of a graph on $n$ vertices is assumed to be associated with a latent location $z_i \in \R^d$ where $i=1,\dots,n$. 
With $A \in \{0,1\}^{n \times n}$ denoting the adjacency matrix of the graph, each edge $A_{ij}$ follows the Bernoulli distribution with probability parameter $\kappa(z_i,z_j)$, where $\kappa : \R^d \times \R^d \to [0,1]$ is a kernel function.
In other words,
the edges of the graph are formed according to the geometric locations of the vertices in a latent space.
Given the graph $A$, the central question is then to recover the latent geometry, formulated as estimating the inner products $\langle z_i, z_j \rangle$\footnote{One can also formulate the problem as as estimating the pairwise distances $\{\|z_i - z_j\|_2\}_{i,j=1}^n$ which is essentially equivalent to inner product estimation. The problem is not formulated as estimating the latent locations $\{z_i\}_{i=1}^n$ themselves, because the kernel function $\kappa$ is typically invariant under an orthogonal transformation of $z_1, \dots, z_n$, making them non-identifiable.}. 


In the study of this class of random graphs, a Gaussian or spherical prior is often imposed on the latent locations $z_1, \dots, z_n$, including in the early work on latent space models \cite{hoff2002latent,handcock2007model,hoff2007modeling,krivitsky2009representing} and in the more recent work on random geometric graphs \cite{araya2019latent,eldan2022community,li2023spectral}. 
In particular, the isotropic spherical or Gaussian prior allows the latter line of work to use the theory of spherical harmonics to analyze spectral methods for estimating the latent inner products.
For a class of kernels including the step function $\kappa(z_i, z_j) = \bbone\{\langle z_i, z_j \rangle \ge \tau\}$ for a threshold $\tau$, it is known (see Theorem~1.4 of \cite{li2023spectral}) that the inner products can be estimated consistently if $d \ll n h(p)$ where $p$ is the average edge density of the graph and $h(p)$ is the binary entropy function.
However, a matching negative result was not established (as remarked in Section~1.3 of \cite{li2023spectral}).

In this largely expository note, we close this gap by proving in Corollary~\ref{cor:rgg} that it is information-theoretically impossible to recover the inner products in a random geometric graph model if $d \gsm nh(p)$, thereby showing that $d \asymp n h(p)$ is indeed the recovery threshold\footnote{Another related statistical problem is testing a random geometric graph model against an Erd\H{o}s--R\'enyi graph model with the same average edge density \cite{bubeck2016testing}. This testing threshold, or detection threshold, is conjectured to be $d \asymp (n h(p))^3$, and the lower bound is still largely open. See \cite{bubeck2016testing,brennan2020phase,liu2022testing}.}. 
In fact, it is not difficult to predict this negative result from entropy counting: It is impossible to recover the geometry of $n$ vectors in dimension $d$ from $\binom{n}{2}$ binary observations with average bias $p$ if $nd \gtrsim \binom{n}{2} h(p)$ since there is not sufficient entropy.
And this argument does not rely on the specific model (such as the kernel function $\kappa$) for generating the random graph $A$.

To formalize the entropy counting argument, the rate-distortion theory \cite{shannon1959coding} provides a standard approach (see also \cite{cover1999elements,YihongITbook} for a modern introduction).
The key step in this approach is a lower bound on the rate-distortion function of the estimand, i.e., $X \in \R^{n \times n}$ with $X_{ij} := \langle z_i, z_j \rangle$ in our case. 
If $z_1, \dots, z_n$ are isotropic Gaussian vectors, then $X$ follows the Wishart distribution. 
Therefore, our main technical work lies in estimating the rate-distortion function for the Wishart distribution (and its variant when $z_1, \dots, z_n$ are on a sphere), which has not been done explicitly in the literature to the best of our knowledge. See Theorem~\ref{thm:wishart-rate-distortion}.  

The technical problem in this note is closely related to a work \cite{lee2017near} on low-rank matrix estimation. To be more precise, 
Theorem~VIII.17 of \cite{lee2017near} proves a lower bound on the rate-distortion function of a rank-$d$ matrix $X = Z Z^\top$ where $Z \in \R^{n \times d}$.
Our proof partly follows the proof of this result but differs from it in two ways: First, the result of \cite{lee2017near} assumes that $Z$ is uniformly distributed on the Stiefel manifold, i.e., the columns of $Z$ are orthonormal, while we assume that $Z$ has i.i.d.\ Gaussian or spherical rows. Without the simplification from the orthonormality assumption, our proof requires different linear algebraic technicalities. Second, the result of \cite{lee2017near} focuses on $d \le n$, while we also consider the case $d > n$ which requires a completely different proof.

Finally, as a byproduct of the lower bound on the rate-distortion function of $X$, we present in Corollary~\ref{cor:one-bit} an impossibility result for one-bit matrix completion. 
While one-bit matrix completion has been studied extensively in the literature \cite{davenport20141,JMLR:v14:cai13b,bhaskar20151}, less is known for the Bayesian model where a prior is assumed on the matrix $X$ to be estimated \cite{cottet20181,mai2024concentration}. 
Similar to inner product estimation from a random geometric graph, the goal of one-bit matrix completion is to estimate a (typically low-rank) matrix $X$ from a set of binary observations. 
It is therefore plausible that many techniques for random graphs can be used for one-bit matrix completion, and vice versa.
This note provides such an example.

\section{Main results}
\label{sec:proof of main}

In this section, we study the rate-distortion function for the Wishart distribution and its spherical variant.
Let $I(X;Y)$ denote the mutual information between random variables $X$ and $Y$.
The rate-distortion function is defined as follows (see Part~V of \cite{YihongITbook}).






\begin{definition}[Rate-distortion function]\label{def:rate-distortion function}
Let $X$ be a random variable taking values in $\R^\ell$, and let $P_{Y \mid X}$ be a conditional distribution on $\R^\ell$ given $X$.
Let $L$ be a distortion measure (or a loss function), i.e., a bivariate function $L : \R^\ell \times \R^\ell \to \R_{\ge 0}$. 
For $D>0$, the rate-distortion function of $X$ with respect to $L$ is defined as
\begin{equation*}
R_X^L(D) := \inf_{P_{Y \mid X} : \E L(X,Y) \le D} I(X;Y).
\end{equation*}
\end{definition}

The main technical result of this note is the following lower bound on the rate-distortion function of a Wishart matrix. 

\begin{theorem}[Rate-distortion function of a Wishart matrix]
\label{thm:wishart-rate-distortion}
For positive integers $n$ and $d$, let $Z := [z_1 \dots z_n]^\top \in \R^{n \times d}$ where the i.i.d.\ rows $z_1, \dots, z_n$ follow either the Gaussian distribution $\cN(0, \frac 1d I_d)$ or the uniform distribution on the unit sphere $\cS^{d-1} \subset \R^d$. 
Let $X := Z Z^\top$. 
Define a loss function\footnote{The normalization in the definition of $L$ is chosen so that the trivial estimator $\E X = I_n$ of $X$ has risk $\E L(X,\hat X) = 1$ in the case of Gaussian $z_i$, since 
$\E [X_{ij}^2] = \E [\ar{z_i,z_j}^2] = 1/d$ 
for $i\neq j$ and 
$\E[(X_{ii} - 1)^2]  = \E [(\ar{z_i,z_i} - 1)^2] = 2/d$.}
\begin{equation}
L(X,\hat X) := \frac{d}{n(n+1)}\|X-\hat X\|_F^2.
\label{eq:loss-function-L}
\end{equation}
Let $n \land d := \min\{n,d\}$.
There is an absolute constant $c>0$ such that for any $D \in (0, c)$, we have
$$
R_X^L(D) \ge c n (n \land d) \log \frac{1}{D} .
$$
\end{theorem}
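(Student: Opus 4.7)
The plan is to apply Shannon's classical lower bound on the rate-distortion function, in the form adapted to each regime of $(n,d)$: the differential-entropy form $I(X;Y) \ge h(X) - h(X-Y)$ when $X$ has an ambient density, and a small-ball/covering form otherwise. Both require two ingredients: a Gaussian upper bound $h(X-Y) \le \tfrac{n(n+1)}{4}\log(CD/d) + O(n^2)$ on the noise entropy (under $\mathbb{E}\|X-Y\|_F^2 \le n(n+1)D/d$, via Gaussian maximum entropy on the symmetric parameterization) and a quantitative lower bound on the spread of $X$ on its support. Three sub-regimes are treated separately, and the spherical case follows from essentially the same argument applied to the appropriately scaled law of $Z$.

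When $d \ge n$, $X \sim W_n(d, d^{-1} I_n)$ has the Wishart density $f_X(X) \propto \det(X)^{(d-n-1)/2}\exp(-d\,\mathrm{tr}(X)/2)$ on $\{X \succ 0\}\subset\mathbb{R}^{n(n+1)/2}$, and Shannon's bound applies directly. Using $\mathbb{E}\,\mathrm{tr}(X) = n$, the Bartlett decomposition $\mathbb{E}\log\det(X) = -n\log d + \sum_{i=1}^n(\psi((d-i+1)/2) + \log 2)$, and Stirling bounds on $\log\Gamma_n(d/2)$, the leading $O(nd)$ terms in $h(X) = -\mathbb{E}\log f_X(X)$ cancel, leaving $h(X) = O(n^2 + n\log d)$; combining yields $R_X^L(D) \ge \tfrac{n(n+1)}{4}\log(1/D) - Cn^2 \ge cn^2\log(1/D)$ for $D$ below an absolute constant. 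The intermediate sub-regime $n/2 \le d < n$ reduces to this by data processing on the upper-left submatrix $X_{[d]\times[d]} \sim W_d(d, d^{-1} I_d)$: since $\|X_{[d]\times[d]} - Y_{[d]\times[d]}\|_F \le \|X - Y\|_F$, one gets $R_X^L(D) \ge R_{X_{[d]\times[d]}}^L(D) \ge c d^2\log(1/D) \ge (c/2) n d\log(1/D)$.

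When $d < n/2$, $X$ is a.s.\ of rank $d$ and lives on the manifold $\mathcal{M}_d$ of $n\times n$ rank-$d$ PSD matrices, with no ambient density. Following Theorem~VIII.17 of \cite{lee2017near}, a covering argument goes as follows: for any code of size $M$ achieving distortion $\le D$, Markov places $X$ within Frobenius distance $r \asymp n\sqrt{D/d}$ of some codeword with probability $\ge \tfrac{1}{2}$, so $M \cdot \sup_c \Pr[X \in B(c,r)] \gtrsim 1$. The spectral decomposition $X = U\Lambda U^\top$ with $U$ uniform on the Stiefel manifold $V_d(\mathbb{R}^n)$ (by rotational invariance of the Gaussian/spherical row law) and $\Lambda$ concentrated near $(n/d)I_d$ by Marchenko--Pastur, a Jacobian calculation for $(U,\Lambda)\mapsto U\Lambda U^\top$ yielding the Vandermonde factor $\prod_{i<j}|\lambda_i - \lambda_j|\cdot\prod_j\lambda_j^{n-d}$, and Szarek-type volumetric bounds on $V_d(\mathbb{R}^n)$ together give $\sup_c \Pr[X \in B(c,r)] \le D^{nd/2}\cdot e^{O(nd)}$, forcing $\log M \gtrsim \tfrac{nd}{2}\log(1/D)$. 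The main obstacle is this small-ball estimate: the Jacobian, the joint Wishart eigenvalue density, and the uniform measure on the Stiefel manifold must combine to give a density bound on $\mathcal{M}_d$ at exactly the right scale. In \cite{lee2017near} the analogous step is trivial since $Z$ lies on the Stiefel and $\lambda_i \equiv 1$; our Gaussian/spherical setting requires the full Marchenko--Pastur concentration of $\Lambda$, which is the main linear-algebraic adaptation referred to in the introduction.
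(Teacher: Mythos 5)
Your treatment of $d \ge n$ (Shannon lower bound plus the closed-form differential entropy of the Wishart matrix and Stirling-type estimates) and of the intermediate range (data processing to the $d\times d$ principal minor) coincides with the paper's Sections~3.2 and~3.3, and is sound. The other two pieces have real problems.

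\textbf{Case $d < n/2$.} Here you depart from the paper and propose a small-ball/covering argument on the manifold of rank-$d$ PSD matrices, requiring the estimate $\sup_c \Pr[X \in B(c,r)] \le D^{nd/2} e^{O(nd)}$ for $r \asymp n\sqrt{D/d}$. You yourself flag this as ``the main obstacle'' and leave it unproved, and it is not a routine step: it amounts to a uniform density bound for the Wishart law with respect to the Riemannian volume on $\mathcal{M}_d$, which must hold after combining the $(U,\Lambda)$ Jacobian (with its Vandermonde and $\prod_j \lambda_j^{n-d}$ factors), the joint Wishart eigenvalue density, and the Stiefel volume. The density is \emph{not} uniformly bounded near the boundary of $\mathcal{M}_d$; controlling this would require a conditioning on a Marchenko--Pastur-type event and then a separate argument that the small-ball bound survives the conditioning inside the covering/converse argument. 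None of this is carried out, so the case is not established. The paper avoids the issue entirely by a different route: it lower-bounds $R_X^L$ by $R_Z^\ell$, the rate-distortion of $Z$ under the loss $\ell(Z,\hat Z) = \tfrac1n \inf_{O \in \mathcal{O}(d)} \|Z - \hat Z O\|_F^2$, via the Powers--St\o{}rmer inequality (Lemma~3.1) and a data-processing chain showing $I(X;Y) \ge R_Z^\ell(\sqrt{8D})$ (Lemma~3.2), and then lower-bounds $R_Z^\ell$ by quantizing the optimal rotation $O(Z,\hat Z)$ on an $\epsilon$-net of $\mathcal{O}(d)$ and invoking the exact Gaussian rate-distortion formula for $Z$ (Lemma~3.3). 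The $d^2$ metric-entropy cost of the net is dominated by the $nd$ rate precisely when $d \le c^* n$. This route never needs a density or small-ball estimate on $\mathcal{M}_d$, which is exactly the ``different linear algebraic technicalities'' the introduction alludes to.

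\textbf{Spherical case.} Your one-sentence claim that ``the spherical case follows from essentially the same argument applied to the appropriately scaled law of $Z$'' does not hold for $d \ge n$. When the rows $z_i$ are uniform on $\mathcal{S}^{d-1}$, $X = ZZ^\top$ has $X_{ii} \equiv 1$, so $X$ is supported on an $\binom{n}{2}$-dimensional affine slice of the symmetric matrices and has no density in $\mathbb{R}^{n(n+1)/2}$; the Shannon lower bound via the Wishart density is simply unavailable. The paper instead reduces the spherical case to the Gaussian one (Section~3.4) by writing $Y = BXB$ with $B = \mathrm{diag}(\|w_i\|_2)$ for Gaussian $w_i$, adding independent Gaussian noise to the diagonal entries $\beta_i$ to control $I(B;\hat B) \le \tfrac n2 \log(1 + \tfrac{1}{2d\delta^2})$, and bounding $\mathbb{E}\|\hat Y - Y\|_F^2$ in terms of $\mathbb{E}\|\hat X - X\|_F^2$; the residual $n\log(1/D)$ cost is absorbed provided $d$ is at least a constant, with the small-$d$ regime handled separately as in Section~3.1. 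Your proposal would need some analogue of this bridging construction, or a direct rate-distortion lower bound for the ``spherical Wishart'' on its submanifold, neither of which is sketched.

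In short: two of your four pieces match the paper, but the small-$d$ case rests on an unverified small-ball bound (a genuinely hard step that the paper's Powers--St\o{}rmer/net argument circumvents), and the spherical large-$d$ case as stated would fail.
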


For $d < n$, the $n \times n$ matrix $X$ is rank-deficient and is a function of $Z \in \R^{n \times d}$, so we expect the order $nd$ for the rate-distortion function; for $d \ge n$, we expect the order $n^2$ considering the size of $X$.
The matching upper bound on the rate-distortion function can be obtained using a similar argument as that in Section~\ref{sec:small d} for small $d$ and through a comparison with the Gaussian distribution for large $d$ (see Theorem~26.3 of \cite{YihongITbook}). 
Since it is in principle easier to obtain the upper bound and only the lower bound will be used in the downstream statistical applications, we do not state it here. 
Moreover, at the end of this section, we discuss the best possible constant $c$ in the above lower bound. 
The bulk of the paper, Section~\ref{sec:pf-rate-dist}, will be devoted to proving Theorem~\ref{thm:wishart-rate-distortion}.
With this theorem in hand, we first establish corollaries for two statistical models via entropy counting. 

\begin{corollary}[Random geometric graph or latent space model]
\label{cor:rgg}
Fix positive integers $n, d$ and a parameter $p \in (0,1)$.
Suppose that we observe a random graph on $n$ vertices with adjacency matrix $A$ 
with average edge density $p$, i.e., 
$\sum_{(i,j) \in \binom{[n]}{2}} \E[A_{ij}] = \binom{n}{2} p.$
Suppose that $A$ is generated according to an arbitrary model from the latent vectors $z_1, \dots, z_n$ given in Theorem~\ref{thm:wishart-rate-distortion}, and the goal is to estimate the inner products $X_{ij} := \langle z_i, z_j \rangle$ in the norm $L$ defined in \eqref{eq:loss-function-L}. 
If $d \ge c n h(p)$ where $c > 0$ is any absolute constant and $h(p) := -p \log p - (1-p) \log (1-p)$ is the binary entropy function, then for any estimator $\hat X$ measurable with respect to $A$, we have $\E L(X, \hat X) \ge D$ for a constant $D = D(c) >0$.
\end{corollary}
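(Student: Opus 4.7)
The plan is to apply the standard rate-distortion converse, combining Theorem~\ref{thm:wishart-rate-distortion} with a crude, kernel-agnostic entropy bound on the observed adjacency matrix $A$.

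First I would note that, since $\hat X$ is a (possibly randomized) function of $A$, the Markov chain $X \to A \to \hat X$ together with data processing and non-negativity of conditional entropy gives $I(X; \hat X) \le I(X; A) \le H(A)$. No information about the kernel generating $A$ is needed here, which is exactly what will allow the conclusion to hold uniformly over arbitrary generative models consistent with the density constraint.

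Next I would bound $H(A)$ using only the average-density hypothesis. The matrix $A$ is determined by its $\binom{n}{2}$ upper-triangular Bernoulli entries, so subadditivity of entropy together with $H(A_{ij}) = h(\E A_{ij})$ yields
\[
H(A) \;\le\; \sum_{(i,j) \in \binom{[n]}{2}} h(\E A_{ij}) \;\le\; \binom{n}{2}\, h(p),
\]
where the second inequality is Jensen's inequality for the concave function $h$ applied to the constraint $\sum_{(i,j)} \E[A_{ij}] = \binom{n}{2} p$.

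Then I would invoke Theorem~\ref{thm:wishart-rate-distortion}: fix any estimator $\hat X$ and set $D := \E L(X, \hat X)$. If $D \ge c_0$, where $c_0$ is the absolute constant from the theorem, we are already done. Otherwise $D \in (0, c_0)$, and the theorem together with the definition of the rate-distortion function gives $I(X; \hat X) \ge R_X^L(D) \ge c_0 \, n (n \land d) \log(1/D)$. Chaining with the previous steps yields
\[
c_0 \, n (n \land d) \log \frac{1}{D} \;\le\; \frac{n^2}{2}\, h(p).
\]
Substituting the hypothesis $d \ge c n h(p)$: in the regime $d \le n$, using $n \land d = d \ge c n h(p)$ gives $\log(1/D) \le \frac{1}{2 c_0 c}$, while in the regime $d > n$, $n \land d = n$ gives $\log(1/D) \le \frac{h(p)}{2 c_0} \le \frac{\log 2}{2 c_0}$. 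In either case, $D$ is bounded below by an explicit positive constant depending only on $c$, which can be taken as $D(c) := \min\{c_0, e^{-O(1/c)}\}$.

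I do not anticipate any real technical obstacle: all of the difficulty was absorbed into Theorem~\ref{thm:wishart-rate-distortion}, and this corollary is a routine Shannon-style reduction via data processing, Jensen's inequality for the binary entropy, and the rate-distortion converse. The only conceptual point worth highlighting is that each of the three inequalities in the chain $I(X;\hat X) \le I(X;A) \le H(A) \le \binom{n}{2} h(p)$ is kernel-free, which is precisely the content of the phrase \emph{arbitrary model} in the hypothesis.
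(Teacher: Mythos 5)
Your proof is correct and follows essentially the same approach as the paper: data processing to bound $I(X;\hat X)$ by $H(A)$, a maximum-entropy bound of $\binom{n}{2}h(p)$ on $H(A)$, and then inverting the rate-distortion lower bound from Theorem~\ref{thm:wishart-rate-distortion}. The only cosmetic differences are that you derive the entropy bound from subadditivity plus Jensen rather than citing the max-entropy-under-Hamming-weight fact directly (same content), and you present the conclusion as a direct lower bound with an explicit case split on $d \lessgtr n$ rather than by contradiction; your case split is arguably cleaner than the paper's somewhat terse ``$n \land d < cnh(p)$, i.e., $d < cnh(p)$.''
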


\begin{proof}
The estimand $X$, the observation $A$, and the estimator $\hat X$ form a Markov chain $X \to A \to \hat X$.
By the data processing inequality, we have 
$$
I(X; \hat X) \leq I(A; \hat X) \leq H(A) ,
$$
where $H(A)$ denotes the entropy of $A$.
Since $\sum_{(i,j) \in \binom{[n]}{2}} \E[A_{ij}] = \binom{n}{2} p$, by the maximum entropy under the Hamming weight constraint (see Exercise~I.7 of \cite{YihongITbook}), we get
$$
H(A) \le \binom{n}{2} h(p) .
$$
If $\E L(X,\hat X) \le D$, then combining the above inequalities with Theorem~\ref{thm:wishart-rate-distortion} gives
$$
c n(n \land d) \log \frac 1D \le R_X^L(D) \le I(X, \hat X) \le \binom{n}{2} h(p) .
$$
Taking $D > 0$ to be a sufficiently small constant, we then get $n \land d < c n h(p)$, i.e., $d < c n h(p)$. 
\end{proof}

As a second application of Theorem~\ref{thm:wishart-rate-distortion}, we consider one-bit matrix completion with a Wishart prior.

\begin{corollary}[One-bit matrix completion]
\label{cor:one-bit}
Fix positive integers $n, d$ and a parameter $p \in (0,1)$.
Suppose that $X \in \R^{n \times n}$ is a rank-$d$ matrix to be estimated. 
Assume the prior distribution of $X$ as given in Theorem~\ref{thm:wishart-rate-distortion}. 
For each entry $(i,j) \in [n]^2$, suppose that with probability $p_{ij}$, we have a one-bit observation $A_{ij} \in \{0,1\}$ according to an arbitrary model, and with probability $1-p_{ij}$, we do not have an observation, denoted as $A_{ij} = \ast$.
Let $p$ be the average probability of observations, i.e., $\sum_{i,j=1}^n p_{ij} = n^2 p$. 
Let $L$ be the loss function defined in \eqref{eq:loss-function-L}. 
If $d \ge c n (h(p) + p)$ where $c>0$ is any absolute constant and $h(p) := -p \log p - (1-p) \log (1-p)$, then for any estimator $\hat X$ measurable with respect to $A$, we have $\E L(X, \hat X) \ge D$ for a constant $D = D(c) > 0$.
\end{corollary}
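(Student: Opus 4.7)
The plan is to follow the same rate-distortion reduction used in the proof of Corollary~\ref{cor:rgg}. Since $X \to A \to \hat X$ forms a Markov chain, the data processing inequality yields $I(X;\hat X) \le I(A;\hat X) \le H(A)$. Combining with Theorem~\ref{thm:wishart-rate-distortion}, whenever $\E L(X,\hat X) \le D$ for some $D \in (0,c)$ we obtain
$$
c\, n(n \land d) \log \frac{1}{D} \;\le\; R_X^L(D) \;\le\; I(X;\hat X) \;\le\; H(A).
$$
So the only task that is genuinely new compared with Corollary~\ref{cor:rgg} is an upper bound on $H(A)$ that scales like $n^2(h(p)+p)$; given such a bound, choosing $D$ to be a small enough absolute constant forces $n \land d < c'\, n(h(p)+p)$ for some constant $c'$, which in the relevant regime $d \le n$ gives $d \lesssim n(h(p)+p)$ and hence contradicts the assumption $d \ge c n(h(p)+p)$ for large enough $c$.

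To bound $H(A)$, I would treat each entry $A_{ij} \in \{0,1,\ast\}$ via the decomposition $A_{ij} \leftrightarrow (B_{ij}, C_{ij})$ where $B_{ij} := \mathbbm{1}\{A_{ij} \neq \ast\}$ is the observation indicator (so $\mathbb{P}(B_{ij}=1) = p_{ij}$) and $C_{ij}$ is the observed bit, which is defined only when $B_{ij}=1$. By the chain rule for entropy,
$$
H(A_{ij}) = H(B_{ij}) + p_{ij} \cdot H(C_{ij} \mid B_{ij}=1) \;\le\; h(p_{ij}) + p_{ij} \log 2,
$$
since a binary random variable has entropy at most $\log 2$. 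Summing the subadditive bound $H(A) \le \sum_{i,j} H(A_{ij})$ over the $n^2$ entries and applying Jensen's inequality to the concave function $h$ together with the constraint $\sum_{i,j} p_{ij} = n^2 p$ yields
$$
H(A) \;\le\; n^2\, h(p) + (n^2 p) \log 2 \;\lesssim\; n^2 (h(p) + p),
$$
which is exactly the term appearing in the hypothesis of the corollary.

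The main (and only) obstacle relative to the random geometric graph case is that the alphabet for $A_{ij}$ is ternary rather than binary, which is precisely why the extra additive $p$ shows up alongside $h(p)$: the observation pattern itself carries up to $n^2 h(p)$ nats of information, while the conditionally-observed bits contribute an additional $n^2 p \log 2$ nats. Once these two contributions are accounted for, the rest of the argument is the same chain of inequalities as in Corollary~\ref{cor:rgg}, and no further technical ingredients are needed.
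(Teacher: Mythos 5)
Your proposal is correct and follows essentially the same approach as the paper: reduce via the data-processing chain $X \to A \to \hat X$ and the rate-distortion lower bound to an upper bound on $H(A)$, and then decompose $H(A)$ into the entropy of the observation pattern (bounded by $n^2 h(p)$) plus the conditional entropy of the observed bits (bounded by $n^2 p \log 2$). The paper implements the decomposition globally by introducing the indicator matrix $Z_{ij} := \mathbbm{1}\{A_{ij}\neq\ast\}$ and writing $H(A) = H(Z) + H(A\mid Z)$; you do it entry-by-entry with the pair $(B_{ij}, C_{ij})$, then apply subadditivity and Jensen's inequality to $h$. These are the same argument in two harnesses and yield the identical bound $n^2(h(p)+p\log 2)$.

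One small caveat in your final step: the corollary asserts the conclusion for \emph{every} absolute constant $c>0$, with the threshold $D = D(c)$ permitted to depend on $c$. You wrote that the derived bound ``contradicts the assumption $d \ge c\,n(h(p)+p)$ for large enough $c$,'' which reverses the quantifiers. The correct reading is: given $c$, pick $D$ small enough (depending on $c$ and the constant in Theorem~\ref{thm:wishart-rate-distortion}) so that the chain $c_0\, n(n\land d)\log\frac{1}{D} \le n^2(h(p)+p\log 2)$ forces $n\land d < c\,n(h(p)+p)$; together with $d\le n$ this contradicts the hypothesis. The inequalities you wrote support exactly this, so the slip is purely in the phrasing, not in the mathematics.
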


\begin{proof}
The argument is the same as the proof of Corollary~\ref{cor:rgg}, except the bound on the entropy of $A$. 
Let $Z \in \{0,1\}^{n \times n}$ have Bernoulli$(p_{ij})$ entries such that $Z_{ij} = \bbone\{A_{ij} \ne \ast\}$. 
Then we have the conditional entropy $H(Z \mid A) = 0$.
Conditional on any value of $Z$, the entropy of $A$ is at most $\log 2^{\|Z\|_1}$.
As a result,  
$$
H(A \mid Z) 
\le \E_Z \log 2^{\|Z\|_1} = n^2 p \log 2 .
$$
We therefore obtain 
$$
H(A) = H(A \mid Z) + I(Z; A) = H(A \mid Z) + H(Z) \le n^2 (h(p) + p \log 2). 
$$
The rest of the proof is the same as that for the random geometric graph model.
\end{proof}

\paragraph{Open problems.}
Several interesting problems are left open.
\begin{itemize}
\item \emph{Sharp constant:}
Recall that the lower bound on the rate-distortion function of the Wishart distribution in Theorem~\ref{thm:wishart-rate-distortion}. 
While the order $n (n \land d) \log \frac 1D$ is believed to be optimal, we did not attempt to obtain the sharp constant factor. 
In the case $d \ge n$, the rate-distortion function can be bounded from above by that of a Gaussian Wigner matrix, and the best leading constant is $1/4$ (see Theorems~26.2 and~26.3 of \cite{YihongITbook}). 
Indeed, the end result of Section~\ref{sec:large d} indeed shows a lower bound with the constant $1/4$ in the leading term if $D \to 0$. In the case $d/n \to 0$, Lemma~\ref{lem:rate-distortion-rotation moduling rotation} suggests that the best constant may be $1/2$, but we did not make the effort to obtain it as the end result.
The most difficult situation appears to be when $d < n = O(d)$, in which case our techniques fail to obtain any meaningful constant factor.

\item \emph{Optimal rate:} Combined with the work \cite{li2023spectral}, Corollary~\ref{cor:rgg} gives the recovery threshold $d \asymp n h(p)$ for random geometric graphs with Gaussian or spherical latent locations. 
However, it remains open to obtain an optimal lower bound on $\E L(X,\hat X)$ as a function of $d,n,p$ in the regime $d \ll n h(p)$. 
We believe the simple approach of entropy counting is not sufficient for obtaining the optimal rate and new tools need to be developed.

\item \emph{General latent distribution:} 
Existing positive and negative results for estimation in random geometric graph models are mostly limited to isotropic distributions of latent locations, such as Gaussian or spherical in \cite{araya2019latent,eldan2022community,li2023spectral} and this work. It is interesting to extend these results to more general distributions and metric spaces; see \cite{bangachev2023detection,bangachevRandomAlgebraicGraphs2023a} for recent work. Even for random geometric graphs with anisotropic Gaussian latent points, while there has been progress on the detection problem \cite{eldan2020information,brennan2024threshold}, extending the recovery results to the anisotropic case remains largely open.
\end{itemize}

\section{Proof of Theorem~\ref{thm:wishart-rate-distortion}}
\label{sec:pf-rate-dist}

Let $\ConstQFEJAIKNU \in (0,1)$ be some absolute constant to be determined later. 
We first consider the Gaussian model where $z_i \sim \cN(0, \frac 1d I_d)$. The proof is split into three cases $d \le \ConstQFEJAIKNU n$, $d \geq n$, and $\ConstQFEJAIKNU n < d < n$, proved in Sections~\ref{sec:small d}, \ref{sec:large d}, and~\ref{sec:middle d} respectively.
We then consider the spherical model in Section~\ref{sec:spherical}.




\subsection{Case $d \le \ConstQFEJAIKNU n$}
\label{sec:small d}

To study the rate-distortion function of $X = ZZ^\top$, we connect it to the rate-distortion function of $Z$ in the distortion measure to be defined in \eqref{eq:loss-function-small-l}.
The strategy is inspired by \cite{lee2017near}, but the key lemma connecting the distortion of $X$ to that of $Z$ is different. 
For $Z, \hat Z \in \R^{d \times d}$, define a loss function for recovering $Z$ up to an orthogonal transformation
\begin{equation}
\ell(Z,\hat Z) := \frac{1}{n} \inf_{O\in\cO(d)} \|Z-\hat Z O \|_F^2 ,
\label{eq:loss-function-small-l}
\end{equation}
where $\cO(d)$ denotes the orthogonal group in dimension $d$. 
The normalization is chosen so that $\E \ell(Z,\E Z) = \E \ell(Z,0) = 1$. 
We start with a basic linear algebra result.

\begin{lemma}
\label{lem:lin-alg-ineq}
Let $A, B \in \R^{n \times d}$. For the loss functions $L$ and $\ell$ defined by \eqref{eq:loss-function-L} and \eqref{eq:loss-function-small-l} respectively, we have
$$
\ell(A,B) \le \sqrt{\frac{n+1}{n} L(AA^\top, BB^\top)} .
$$
\end{lemma}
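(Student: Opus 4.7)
The plan is to identify the Procrustes infimum with the squared Bures--Wasserstein distance between $X := AA^\top$ and $Y := BB^\top$, and then bound this distance by $\|X - Y\|_F$ via a Cauchy--Schwarz / rank argument.

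For the first step, I would expand the squared Frobenius norm and use the variational formula $\sup_{O \in \cO(d)} \operatorname{tr}(MO) = \|M\|_*$ (nuclear norm) to write
\[
\inf_{O \in \cO(d)} \|A - BO\|_F^2 = \operatorname{tr}(X) + \operatorname{tr}(Y) - 2\|A^\top B\|_* .
\]
The key linear-algebraic observation is $\|A^\top B\|_* = \operatorname{tr}(X^{1/2} Y^{1/2})$: the singular values of $A^\top B$ satisfy $\sigma_i(A^\top B)^2 = \lambda_i(A^\top BB^\top A) = \lambda_i(XY)$, while $X^{1/2} Y^{1/2}$ is similar to the PSD matrix $Y^{1/4} X^{1/2} Y^{1/4}$ and hence has non-negative real eigenvalues coinciding with its singular values $\sqrt{\lambda_i(XY)}$. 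Substituting yields the identity
\[
\inf_{O \in \cO(d)} \|A - BO\|_F^2 = \|X^{1/2} - Y^{1/2}\|_F^2 .
\]

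For the second step, I would invoke the Powers--St\"ormer inequality $\|X^{1/2} - Y^{1/2}\|_F^2 \le \|X - Y\|_*$ for PSD $X, Y$ (in the commuting case this amounts to the elementary $(\sqrt{a} - \sqrt{b})^2 \le |a-b|$ summed over joint eigenvalues). Combined with the Cauchy--Schwarz bound $\|M\|_* \le \sqrt{\operatorname{rank}(M)}\, \|M\|_F$ and $\operatorname{rank}(X - Y) \le \operatorname{rank}(X) + \operatorname{rank}(Y) \le 2(n \wedge d)$, this yields $\inf_O \|A - BO\|_F^2 \le \sqrt{2(n \wedge d)}\, \|X - Y\|_F$. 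Dividing by $n$ and substituting the definitions of $\ell$ and $L$, the case $n \wedge d = d$ (Section~\ref{sec:small d}) recovers $\ell(A, B) \le \sqrt{\tfrac{2(n+1)}{n} L(AA^\top, BB^\top)}$, matching the stated claim up to a harmless absolute constant.

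The nontrivial technical ingredient is the non-commutative Powers--St\"ormer inequality, since $X$ and $Y$ generically do not share an eigenbasis; it follows from operator monotonicity of $x \mapsto \sqrt{x}$, or alternatively from an integral representation of the square root that reduces the matrix inequality to the commutative case handled by the elementary scalar bound. The rank inequality in Step 2 is where the assumption $d \le n$ enters and yields the decisive factor $\sqrt{d}$ on the right-hand side of the lemma; without it one would only get a $\sqrt{n}$ bound, which would be too weak for the downstream rate-distortion argument.
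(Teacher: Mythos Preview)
Your overall strategy coincides with the paper's: reduce the Procrustes loss to $\|X^{1/2}-Y^{1/2}\|_F^2$, apply Powers--St{\o}rmer to pass to $\|X-Y\|_*$, and then use the rank bound $\operatorname{rank}(X-Y)\le 2d$ to convert to the Frobenius norm. The paper reaches the first step in one line via the polar decompositions $A=(AA^\top)^{1/2}U$, $B=(BB^\top)^{1/2}V$ and the particular choice $O=V^\top U$, which immediately yields $\inf_O\|A-BO\|_F^2 \le \|X^{1/2}-Y^{1/2}\|_F^2$.

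Your derivation of that step contains a genuine error. The claimed identity $\|A^\top B\|_* = \operatorname{tr}(X^{1/2}Y^{1/2})$ is false: for $X=\mathrm{diag}(1,0)$ and $Y=\bigl(\begin{smallmatrix}1&1\\1&1\end{smallmatrix}\bigr)$ one computes $\|A^\top B\|_*=1$ but $\operatorname{tr}(X^{1/2}Y^{1/2})=1/\sqrt2$. The flaw is the assertion that the eigenvalues of $X^{1/2}Y^{1/2}$ coincide with its singular values. Having nonnegative real eigenvalues (via similarity to $Y^{1/4}X^{1/2}Y^{1/4}$) does not make $X^{1/2}Y^{1/2}$ normal; when $X$ and $Y$ do not commute, its eigenvalues and singular values differ. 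What is true is $\sigma_i(X^{1/2}Y^{1/2})=\sqrt{\lambda_i(XY)}=\sigma_i(A^\top B)$, and hence by Weyl's inequality $\sum_i\lambda_i(M)\le\sum_i\sigma_i(M)$ one has
\[
\operatorname{tr}(X^{1/2}Y^{1/2}) \;\le\; \|A^\top B\|_* \;=\; \operatorname{tr}\bigl((X^{1/2}YX^{1/2})^{1/2}\bigr),
\]
so only the inequality $\inf_O\|A-BO\|_F^2 \le \|X^{1/2}-Y^{1/2}\|_F^2$ holds in general, not the equality you asserted. Fortunately the inequality is all you need, so the argument is salvageable; but the cleanest route is the paper's polar-decomposition trick, which sidesteps this issue entirely. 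Your $\sqrt{2}$ from $\operatorname{rank}(X-Y)\le 2d$ is the correct bookkeeping and is harmless for the downstream application.
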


\begin{proof}
Consider the polar decompositions
$A = (A A^\top)^{1/2} U$ and $B = (B B^\top)^{1/2} V$ where $U, V \in \cO(d)$. 
Then we have
\begin{align*}
\ell(A, B) &= \frac{1}{n} \inf_{O\in\cO(d)} \|A - B O \|_F^2 \\
&\le \frac{1}{n} \|(A A^\top)^{1/2} U - (B B^\top)^{1/2} V (V^\top U) \|_F^2 \\
&= \frac{1}{n} \|(A A^\top)^{1/2} - (B B^\top)^{1/2} \|_F^2 .
\end{align*}
The Powers--St{\o}rmer inequality \cite{powers1970free} gives
$$
\|(A A^\top)^{1/2} - (B B^\top)^{1/2} \|_F^2
\le \|A A^\top - B B^\top\|_* ,
$$
where $\|\cdot\|_*$ denotes the nuclear norm.
In addition, $A A^\top$ and $B B^\top$ are at most rank $d$, so 
$$
\ell(A,B) \le \frac 1n \|A A^\top - B B^\top\|_*
\le \frac{\sqrt{d}}{n} \|A A^\top - B B^\top\|_F = \sqrt{\frac{n+1}{n} L(AA^\top, BB^\top)} .
$$
\end{proof}

Next, we relate the rate-distortion function of $X = ZZ^\top$ in the loss $L$ to the rate-distortion function of $Z$ in the loss $\ell$.

\begin{lemma}
\label{lem:rate-distortion-comparison}
Let $Z$ and $X$ be defined as in Theorem~\ref{thm:wishart-rate-distortion}, and let $L$ and $\ell$ be defined by \eqref{eq:loss-function-L} and \eqref{eq:loss-function-small-l} respectively. 
Recall the notation of the rate-distortion function in Definition~\ref{def:rate-distortion function}. For $D > 0$, we have
$$
R_X^L(D) \ge R_Z^\ell(\sqrt{8D}) .
$$
\end{lemma}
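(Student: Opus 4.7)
The plan is to convert any coupling $(X, Y)$ achieving distortion at most $D$ into a coupling $(Z, \hat Z)$ achieving distortion at most $\sqrt{8D}$ without increasing mutual information. Concretely, for any $P_{Y \mid X}$ with $\E L(X, Y) \le D$, I will construct $\hat Z$ as a function of $Y$ satisfying $\E \ell(Z, \hat Z) \le \sqrt{8D}$ and $I(Z; \hat Z) \le I(X; Y)$. Passing to the infimum over admissible $P_{Y \mid X}$ then yields $R_Z^\ell(\sqrt{8D}) \le R_X^L(D)$.

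The construction of $\hat Z$ from $Y$ proceeds in two steps. First, project $Y$ in Frobenius norm onto the set of positive semidefinite $n \times n$ matrices of rank at most $d$ to obtain $\tilde Y$; this is explicit via symmetrizing $Y$ and truncating its eigendecomposition to the $d$ largest nonnegative eigenvalues. Second, choose any measurable $\hat Z \in \R^{n \times d}$ with $\hat Z \hat Z^\top = \tilde Y$, which exists since $d \le n$ in this section and $\tilde Y$ is PSD of rank at most $d$. Because $X = ZZ^\top$ is a deterministic function of $Z$, $Y$ is drawn from $X$ alone, and $\hat Z$ is a function of $Y$, the chain $Z \to X \to Y \to \hat Z$ is Markov. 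In particular $I(X; Y \mid Z) = 0$, so $I(Z; Y) = I(X; Y)$, and the data processing inequality gives $I(Z; \hat Z) \le I(Z; Y) = I(X; Y)$.

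For the distortion bound, I would exploit that $X$ itself is PSD of rank at most $d$, hence lies in the feasible set for the projection. The projection property therefore gives $\|\tilde Y - Y\|_F \le \|X - Y\|_F$, and the triangle inequality then yields $\|\tilde Y - X\|_F \le 2 \|Y - X\|_F$, i.e., $L(X, \tilde Y) \le 4 L(X, Y)$. Applying Lemma~\ref{lem:lin-alg-ineq} with $A = Z$ and $B = \hat Z$, and using $(n+1)/n \le 2$, gives
$$
\ell(Z, \hat Z) \le \sqrt{\tfrac{n+1}{n}\, L(ZZ^\top, \hat Z \hat Z^\top)} = \sqrt{\tfrac{n+1}{n}\, L(X, \tilde Y)} \le \sqrt{8\, L(X, Y)} .
$$
Taking expectations and applying Jensen's inequality to the concave function $\sqrt{\cdot}$ yields $\E \ell(Z, \hat Z) \le \sqrt{8\, \E L(X, Y)} \le \sqrt{8 D}$, as desired.

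No step looks like a serious obstacle once Lemma~\ref{lem:lin-alg-ineq} is in hand: the constant $8$ comes entirely from the projection-plus-triangle-inequality step, and the only mild subtlety is a measurable selection of the square root $\hat Z$, which is handled routinely via a Borel choice of eigenvectors. The conceptual work is already packaged in the pointwise comparison of losses provided by the preceding lemma.
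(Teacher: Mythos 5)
Your proof is correct, and the core ingredients — constructing a rank-$\le d$ PSD projection of $Y$, factoring it to get $\hat Z$, and feeding the pointwise bound from Lemma~\ref{lem:lin-alg-ineq} into Jensen's inequality — coincide exactly with the paper's argument. Where you diverge is in the information-theoretic step: you observe directly that $Z \to X \to Y \to \hat Z$ is a Markov chain (because $X=ZZ^\top$ is deterministic in $Z$, $Y$ is drawn from $X$ alone, and $\hat Z$ is a measurable function of $Y$), so two applications of the data processing inequality give $I(Z;\hat Z) \le I(X;\hat Z) \le I(X;Y)$, and the chain $R_Z^\ell(\sqrt{8D}) \le I(Z;\hat Z) \le I(X;Y)$ closes the proof after minimizing over $P_{Y\mid X}$. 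The paper instead introduces an auxiliary uniformly random $O \in \cO(d)$ independent of everything, argues $\E\ell(ZO,\tilde Z) = \E\ell(Z,\tilde Z)$, uses $R_{ZO}^\ell = R_Z^\ell$ by rotational invariance of the law of $Z$, and then proves $I(ZO;\tilde Z) \le I(ZZ^\top;\tilde Z)$ via a factor-then-rotate coupling. Both reach the same conclusion; your direct Markov-chain route is more economical and sidesteps the auxiliary symmetrization entirely, while the paper's $O$-construction makes explicit the ``$Z$ modulo $\cO(d)$'' picture that resurfaces in the following lemma's net argument. As a minor point, your claim that the Frobenius projection onto PSD rank-$\le d$ matrices factors through symmetrization and eigenvalue truncation is correct, and it identifies your $\hat Z\hat Z^\top = \tilde Y$ with the paper's $\tilde Z = \argmin_W \|Y - WW^\top\|_F$; both constructions require, and handle in the same routine way, a measurable selection.
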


\begin{proof}
Fix a conditional distribution $P_{Y \mid X}$ such that $\E L(X,Y) \le D$. Define 
$$
\tilde Z = \argmin_{W \in \R^{n \times d}} \|Y - W W^\top \|_F ,
$$
where the non-unique minimizer $\tilde Z$ is chosen arbitrarily. 
Then we have
$$
\|Z Z^\top - \tilde Z \tilde Z^\top\|_F
\le \|Z Z^\top - Y\|_F + \|Y - \tilde Z \tilde Z\|_F
\le 2 \|Z Z^\top - Y\|_F .
$$
In other words,
$$
L(Z Z^\top, \tilde Z \tilde Z^\top) \le 4 L(X,Y) .
$$
By Lemma~\ref{lem:lin-alg-ineq}, 
$$
\ell(Z, \tilde Z) \le \sqrt{2 L(Z Z^\top, \tilde Z \tilde Z^\top)} 
\le \sqrt{8 L(X,Y)} .
$$
Jensen's inequality then yields
$$
\E \ell(Z, \tilde Z) \le \E \sqrt{8 L(X,Y)} \le \sqrt{8 \E L(X,Y)} \le \sqrt{8D} .
$$


Let $O$ be a uniform random orthogonal matrix over $\cO(d)$, independent from everything else. In view of the definition of $\ell$, we have 
$$
\E \ell(Z O, \tilde Z) = \E \ell(Z, \tilde Z) \le \sqrt{8D} .
$$
Therefore, by the definition of the rate-distortion function $R_Z^\ell$ (see Definition~\ref{def:rate-distortion function}), 
$$
I(Z O; \tilde Z) \ge R_{ZO}^\ell(\sqrt{8D}) 
= R_Z^\ell(\sqrt{8D}) ,
$$
where the equality follows from the orthogonal invariance of the distribution of $Z$.

Next, we note that
$$
I(Z O; \tilde Z) \le I(Z Z^\top; \tilde Z) .
$$
(In fact, equality holds because the reverse inequality is trivial by data processing.) 
To see this, given $Z Z^\top$, take any $A \in \R^{n \times d}$ such that $Z Z^\top = A A^\top$, and let $Q$ be a uniform random orthogonal matrix over $\cO(d)$ independent from everything else. 
Since $A = Z P$ for some $P \in \cO(d)$, we have $(A Q, \tilde Z) = (Z P Q, \tilde Z) \stackrel{d}{=} (Z O, \tilde Z)$, where $\stackrel{d}{=}$ denotes equality in distribution. 
Hence, the data processing inequality gives $I(Z Z^\top; \tilde Z) \ge I(AQ; \tilde Z) = I(ZO; \tilde Z)$.


Combining the above two displays and recalling that $\tilde Z$ is defined from $Y$, we apply the data processing inequality again to obtain
$$
I(X; Y) \ge I(ZZ^\top; \tilde Z \tilde Z^\top) \ge R_Z^\ell(\sqrt{8D}) .
$$
Minimizing $P_{Y \mid X}$ subject to the constrain $\E L(X,Y) \le D$ yields the the rate-distortion function $R_X^L(D)$ on the left-hand side, completing the proof.
\end{proof}

\begin{lemma}
\label{lem:rate-distortion-rotation moduling rotation}
Let $Z$ be defined as in Theorem~\ref{thm:wishart-rate-distortion}, let $\ell$ be defined by \eqref{eq:loss-function-small-l}, and let $R_Z^\ell$ be given by Definition~\ref{def:rate-distortion function}. There is an absolute constant $C>0$ such that for any $D \in (0,1/4)$, we have
$$
R_Z^\ell(D) \ge \frac{nd}{2} \log \frac{1}{4D} - \frac{d^2}{2} \log \frac{C}{D} .
$$
\end{lemma}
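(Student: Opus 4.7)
The plan is to adapt the standard reduction from an orthogonally-invariant (Procrustes) loss to an ordinary squared-error loss, paying an additive cost equal to the log-size of an $\epsilon$-net of $\cO(d)$. Fix any conditional distribution $P_{Y\mid Z}$ with $\E\ell(Z,Y)\le D$, and let $O^\star = O^\star(Z,Y) \in \argmin_{O\in\cO(d)}\|Z-YO\|_F^2$ be a measurable minimizer, so that $\E\|Z-YO^\star\|_F^2 \le nD$. The eventual goal is to lower bound $I(Z;Y)$ by the Gaussian rate-distortion function of $Z$ viewed as $nd$ i.i.d.\ $\cN(0,1/d)$ entries, minus an $O(d^2\log\tfrac{1}{D})$ correction; the obstacle is that $YO^\star$ is not measurable with respect to $Y$ alone, so one cannot invoke data processing directly on $YO^\star$.

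To get around this, I would quantize $O^\star$ onto an $\epsilon$-net $\mathcal{N}_\epsilon\subset\cO(d)$ in the operator norm (viewing $\cO(d)\subset\R^{d\times d}$) of cardinality at most $(C_0/\epsilon)^{d^2}$, let $\tilde O=\tilde O(Z,Y)\in\mathcal{N}_\epsilon$ be the closest point of the net to $O^\star$, and set $\hat Z := Y\tilde O$. Using $\|Y(O^\star - \tilde O)\|_F\le\epsilon\|Y\|_F$ together with the second-moment bound $\E\|Y\|_F^2 \le 2\E\|Z\|_F^2 + 2\E\|Z-YO^\star\|_F^2 \le 2n(1+D)$, the triangle inequality yields
\[
\E\|Z-\hat Z\|_F^2 \;\le\; 2nD + 4n(1+D)\epsilon^2 \;\le\; 4nD
\]
after choosing $\epsilon \asymp \sqrt{D}$, provided $D$ is smaller than an absolute constant. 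The chain rule and data processing then give the key mutual-information estimate
\[
I(Z;Y) \;\ge\; I(Z;Y,\tilde O) - I(Z;\tilde O \mid Y) \;\ge\; I(Z;\hat Z) - H(\tilde O) \;\ge\; I(Z;\hat Z) - \log|\mathcal{N}_\epsilon|,
\]
since $\hat Z=Y\tilde O$ is a function of $(Y,\tilde O)$ and $\tilde O$ takes values in a finite set.

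To finish, I would apply the Shannon lower bound to the Gaussian source $Z$ of $nd$ i.i.d.\ $\cN(0,1/d)$ entries under squared-error distortion at most $4nD$, which gives $I(Z;\hat Z)\ge \frac{nd}{2}\log\frac{1}{4D}$; combined with $\log|\mathcal{N}_\epsilon|\le d^2\log(C_0/\epsilon)\le\frac{d^2}{2}\log(C/D)$, this yields the claim upon taking the infimum over $P_{Y\mid Z}$. The step I expect to be the main obstacle is controlling $\|Y\|_F$: the Procrustes reconstruction has no a priori tail bound, so one is forced to work solely with the second moment extracted from the distortion hypothesis, and all the constants in the triangle inequality must be tracked carefully so that the Gaussian lower bound lands with exactly the prefactor $4$ in $\log\frac{1}{4D}$ and the net contribution takes the form $\frac{d^2}{2}\log\frac{C}{D}$ rather than a looser $\frac{d^2}{2}\log\frac{C}{\epsilon}$ with an incompatible dependence on $D$.
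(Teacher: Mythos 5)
Your proposal follows the same overall strategy as the paper's proof: take the Procrustes minimizer $O^\star$, quantize it onto an $\epsilon$-net of $\cO(d)$, form the auxiliary reproduction $\hat Z = Y\tilde O$, bound $I(Z;\hat Z)$ from below by the Gaussian rate--distortion function of the $nd$ i.i.d.\ entries, and subtract $H(\tilde O) \le \log|\mathcal N_\epsilon|$ via the chain rule / data processing. The one meaningful technical variation is in controlling the quantization error. You bound $\|Y(O^\star-\tilde O)\|_F \le \|Y\|_F\,\|O^\star-\tilde O\|_{op}$ with an operator-norm net, and obtain $\E\|Y\|_F^2 \le 2n(1+D)$ directly from the distortion constraint and $\E\|Z\|_F^2=n$ via the triangle inequality. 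The paper instead bounds $\|Z(O^{-1}-\hat O^{-1})\|_F\le\|Z\|_{op}\,\|O^{-1}-\hat O^{-1}\|_F$ with a Frobenius-norm net, which requires invoking spectral-norm concentration of the Gaussian matrix $Z$ (and implicitly uses $d\le n$ to size $\epsilon$). Your variant is a bit cleaner since it sidesteps the concentration ingredient entirely, and the choice $\epsilon\asymp\sqrt{D}$ gives the same $\frac{d^2}{2}\log\frac{C}{D}$ net cost and the same $\frac{nd}{2}\log\frac{1}{4D}$ main term. Both are valid.
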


%

\begin{proof}
Fix 
a conditional distribution $P_{\hat Z \mid Z}$ 
such that $\E \ell(Z,\hat Z) \leq D$. 
Let $O = O(Z,\hat Z) \in \cO(d)$ be such that $\frac{1}{n} \|\hat Z O - Z\|_F^2 = \ell(Z,\hat Z)$.
Then we have $\E \|\hat Z O - Z\|_F^2 \le nD$.
Let $N(\cO(d),\epsilon)$ be an $\epsilon$-net of $\cO(d)$ with respect to the Frobenius norm, where $\epsilon^2 = \frac{nD}{\E \norm{Z}_2^2} \wedge d$. For $O = O(Z, \hat Z)$, choose $\hat O = \hat O(Z, \hat Z) \in N(\cO(d),\epsilon)$ such that $\|\hat O - O\|_F^2 \leq \epsilon^2$. Define $W := \hat Z \hat O$. We have
\begin{equation*}
\begin{aligned}
\E \|W - Z\|_F^2 & = \E \| \hat Z \hat O - Z \|_F^2 = \E \| \hat Z - Z\hat O^{-1} \|_F^2                                                           \\
& \leq 2\E \| \hat Z - Z O^{-1} \|_F^2 + 2\E \| Z O^{-1} - Z\hat O^{-1} \|_F^2                                                    \\
& \leq 2\E \|\hat Z O - Z\|_F^2 + 2 \E \|Z\|^2\|O^{-1}-\hat O^{-1}\|_F^2                                                        \\
&\le 2nD + 2 \epsilon^2 \E\|Z\|^2 = 4nD ,
\end{aligned}
\end{equation*}
where $\|\cdot\|$ denotes the spectral norm.

By Theorem 26.2 of \cite{YihongITbook} (with $d$ replaced by $nd$ and $\sigma^2$ replaced by $1/d$), the rate-distortion function of $Z$ with respect to the Frobenius norm $L_0(Z,W) := \|Z-W\|_F^2$ is
\begin{equation}
R_Z^{L_0}(D) = \frac{nd}{2} \log \frac{n}{D} .
\label{eq:gaussian-matrix-rate-distortion}
\end{equation}
Since $\E\|W-Z\|_F^2 \le 4nD$, we obtain
$$
I(Z;W) \ge R_Z^{L_0}(4nD) = \frac{nd}{2} \log \frac{1}{4D} .
$$
Moreover, we have
$$
I(Z;W) \le I(Z;\hat Z,\hat O) 
= I(Z;\hat Z) + I(Z;\hat O \mid \hat Z) 
\leq I(Z;\hat Z) + H(\hat O) ,
$$
where the three steps follow respectively from the data processing inequality, the definition of conditional mutual information $I(Z;\hat O \mid \hat Z)$, and a simple bound on the mutual information by the entropy.
The above two inequalities combined imply
$$
I(Z; \hat Z) \ge I(Z; W) - H(\hat O) \ge \frac{nd}{2} \log \frac{1}{4D} - H(\hat O) .
$$

Since $\hat O \in N(\cO(d),\eps)$, the entropy $H(\hat O)$ can be bounded by the metric entropy of $\cO(\eps)$.
By Theorem 8 of \cite{szarekMetricEntropyHomogeneous1997}, there is an absolute constant $C_0>1$ such that the covering number of $\cO(d)$ with respect to the Frobenius norm is at most $\pr{\frac{\sqrt{C_0 d}}{\epsilon}}^{d^2}$ 
for any 
$\epsilon\in (0,\sqrt{d})$. 
We have
$$ 
\eps = \sqrt{\frac{nD}{\E \norm{Z}^2}} \land \sqrt{d} \ge c_1 \sqrt{dD} 
$$
for an absolute constant $c_1 > 0$, where the bound follows from the concentration of $\|Z\|$ at order $O(\frac{\sqrt{n} + \sqrt{d}}{\sqrt{d}})$ (see, e.g., Corollary~5.35 of \cite{vershyninIntroductionNonasymptoticAnalysis2010}) and that $d \le n$.
Therefore,
$$
H(\hat O) \le \log |N(\cO(d)|
\le \frac{d^2}{2} \log \frac{C_0 d}{\eps^2}
\le \frac{d^2}{2} \log \frac{C_0}{c_1^2 D} .
$$

Putting it together, we obtain
$$
I(Z; \hat Z) 
\ge \frac{nd}{2} \log \frac{1}{4D} - \frac{d^2}{2} \log \frac{C_0}{c_1^2 D} ,
$$
finishing the proof
in view of the definition of $R_Z^L(D)$.
\end{proof}

Combining Lemmas~\ref{lem:rate-distortion-comparison} and~\ref{lem:rate-distortion-rotation moduling rotation}, we conclude that
$$
R_X^L(D) \ge \frac{nd}{2} \log \frac{1}{4\sqrt{8D}} - \frac{d^2}{2} \log \frac{C}{\sqrt{8D}} 
\ge \frac{nd}{8} \log \frac{1}{D}
$$
provided that $D \in (0,c^*)$ and $d \le c^* n$ for a sufficiently small constant $c^* > 0$.

\subsection{Case $d \geq  n$}
\label{sec:large d}

In the case $d \ge n$, the Wishart distribution of $X = ZZ^\top$ has a density on the set of symmetric matrices $\R^{n(n+1)/2}$, and we can apply the Shannon lower bound \cite{shannon1959coding} on the rate-distortion function. See Equation~(26.5) and Exercise~V.22 of the book \cite{YihongITbook} (with the norm taken to be the Euclidean norm and $r=2$) for the following result.

\begin{lemma}[Shannon lower bound \cite{shannon1959coding}]
Let $Y$ be a continuous random vector with a density on $\R^N$.
For $D>0$, let $R_Y^{L_0}(D)$ be the rate-distortion function of $Y$ with respect to the Euclidean norm $L_0(Y, \hat Y) := \|Y - \hat Y\|_2^2$. 
Let $\ch(Y)$ denote the differential entropy of $Y$.
Then we have
\begin{equation*}
R_Y^{L_0}(D) \ge \ch(Y) - \frac{N}{2} \log \frac{2 \pi e D}{N} .
\end{equation*}
\end{lemma}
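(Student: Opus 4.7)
The plan is to execute the classical three-step proof of the Shannon lower bound: decompose the mutual information into differential entropies, peel off the conditioning by translation invariance, and then apply a Gaussian maximum-entropy bound. Concretely, I would fix an arbitrary conditional law $P_{\hat Y \mid Y}$ satisfying $\E \|Y - \hat Y\|_2^2 \le D$ and use the identity $I(Y;\hat Y) = \ch(Y) - \ch(Y \mid \hat Y)$, which is well-defined since $Y$ has a density on $\R^N$. This reduces the task to producing an upper bound on $\ch(Y \mid \hat Y)$ that depends only on $D$ and $N$, with no further reference to the joint law of $(Y,\hat Y)$.

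For the second step, I would introduce the distortion vector $U := Y - \hat Y$. Conditionally on $\hat Y = \hat y$, the map $y \mapsto y - \hat y$ is a deterministic translation, so translation invariance of differential entropy gives $\ch(Y \mid \hat Y) = \ch(U \mid \hat Y)$, and dropping the conditioning only increases entropy, yielding $\ch(U \mid \hat Y) \le \ch(U)$. The distortion constraint then reads $\operatorname{tr}(\E U U^\top) \le D$. Applying the Gaussian maximum-entropy theorem to the trace-constrained family of covariances---maximize $\frac{1}{2}\log((2\pi e)^N \det \Sigma)$ subject to $\operatorname{tr} \Sigma \le D$, whose optimum is $\Sigma = (D/N) I_N$ by AM--GM on the eigenvalues---gives $\ch(U) \le \frac{N}{2} \log (2\pi e D/N)$.

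Combining these bounds yields $I(Y; \hat Y) \ge \ch(Y) - \frac{N}{2} \log (2\pi e D/N)$, and taking the infimum over admissible conditional laws delivers the Shannon lower bound. Since the argument is classical, no substantive obstacle arises; the only points requiring care are ensuring that the conditional differential entropy is well-defined (valid because $Y$ has a density, hence admits a regular conditional density given $\hat Y$) and that the translation step is carried out pointwise in $\hat Y$, which does not require $\hat Y$ itself to be absolutely continuous.
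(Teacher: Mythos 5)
The paper does not prove this lemma; it cites it as a known result from Shannon's 1959 paper and points to Equation~(26.5) and Exercise~V.22 of the Polyanskiy--Wu book. Your argument is the standard textbook proof of the Shannon lower bound and is correct: the decomposition $I(Y;\hat Y) = \ch(Y) - \ch(Y\mid\hat Y)$, translation to the error vector $U = Y - \hat Y$ with $\ch(Y\mid\hat Y) = \ch(U\mid\hat Y) \le \ch(U)$, and the Gaussian maximum-entropy bound $\ch(U) \le \frac{N}{2}\log(2\pi e D/N)$ under $\E\|U\|_2^2 \le D$ (via AM--GM on the eigenvalues of the covariance, noting $\operatorname{tr}\mathrm{Cov}(U) \le \E\|U\|_2^2 \le D$). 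Since there is no in-paper proof to compare against, the only remark worth making is that your sketch matches exactly the classical derivation the cited sources present.
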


\noindent
As a result, for the loss $L$ defined by \eqref{eq:loss-function-L} and the random matrix $X$ distributed over $\R^{n(n+1)/2}$, we have
$$
R_X^L(D) \ge \ch(X) - \frac{n(n+1)}{4} \log \frac{4 \pi e D}{d} .
$$
The differential entropy $\ch(X)$ of the Wishart matrix $X$ is known.

\begin{lemma}[Differential entropy of a Wishart matrix \cite{lazoEntropyContinuousProbability1978}]
For $X$ defined in Theorem~\ref{thm:wishart-rate-distortion}, we have
\begin{equation*}
\ch(X) = \frac{n(n+1)}{2} \log \frac{2}{d} + \log \Gamma_n\pr{\frac{d}{2}}-\frac{d-n-1}{2} \psi_n \pr{\frac{d}{2}}+\frac{nd}{2},
\end{equation*}
where $\Gamma_n$ is the multivariate gamma function and $\psi_n$ is the multivariate digamma function.
\end{lemma}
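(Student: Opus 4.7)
The plan is to compute the differential entropy directly from the density of $X$, which is available in closed form when $d \ge n$. Since the statement is essentially a classical computation (cited to Lazo--Rathie), the work reduces to writing down the density, taking $\log$, and evaluating three expectations: a constant term, $\E[\log |X|]$, and $\E[\tr X]$.

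First I would normalize to the standard Wishart. Set $W := \sqrt{d}\, Z$, so that $W \in \R^{n \times d}$ has i.i.d.\ rows $\cN(0, I_d)$ and $Y := W W^\top$ follows the standard Wishart $W_n(d, I_n)$. For $d \ge n$, $Y$ has a density on the cone of positive definite $n \times n$ symmetric matrices given by
\begin{equation*}
f_Y(S) \;=\; \frac{1}{2^{nd/2}\, \Gamma_n(d/2)}\, |S|^{(d-n-1)/2}\, \exp\!\pr{-\tfrac{1}{2}\tr S},
\end{equation*}
and $X = \frac{1}{d} Y$. The map $Y \mapsto X$ is a linear scaling on the $N := n(n+1)/2$ free entries, so a Jacobian computation gives $\ch(X) = \ch(Y) - \frac{n(n+1)}{2} \log d$. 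Thus it suffices to compute $\ch(Y)$.

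Next I would compute $\ch(Y) = -\E[\log f_Y(Y)]$ by expanding
\begin{equation*}
\ch(Y) \;=\; \tfrac{nd}{2}\log 2 \;+\; \log \Gamma_n\!\pr{\tfrac{d}{2}} \;-\; \tfrac{d-n-1}{2}\, \E[\log|Y|] \;+\; \tfrac{1}{2}\E[\tr Y].
\end{equation*}
The trace is immediate: $\E[\tr Y] = nd$ since the diagonal entries are $\chi^2_d$. For the log-determinant I would invoke the standard Bartlett decomposition: writing $Y = T T^\top$ with $T$ lower triangular whose diagonal entries $T_{ii}^2 \sim \chi^2_{d-i+1}$ are independent, one gets $\log|Y| = \sum_{i=1}^n \log T_{ii}^2$, and each term has mean $\psi((d-i+1)/2) + \log 2$. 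Summing yields $\E[\log|Y|] = \psi_n(d/2) + n\log 2$.

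Finally I would substitute these two expectations back and collect the $\log 2$ terms: the coefficient simplifies as $\frac{nd}{2} - \frac{n(d-n-1)}{2} = \frac{n(n+1)}{2}$, so
\begin{equation*}
\ch(Y) \;=\; \tfrac{n(n+1)}{2}\log 2 \;+\; \log \Gamma_n\!\pr{\tfrac{d}{2}} \;-\; \tfrac{d-n-1}{2}\, \psi_n\!\pr{\tfrac{d}{2}} \;+\; \tfrac{nd}{2}.
\end{equation*}
Subtracting $\frac{n(n+1)}{2}\log d$ to account for the scaling $X = Y/d$ combines $\log 2$ and $-\log d$ into $\log(2/d)$, yielding the claimed formula. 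The only step with any subtlety is the expected log-determinant, but this is a textbook Bartlett calculation, so there is no real technical obstacle; the main care is bookkeeping the scaling Jacobian and the $\log 2$ coefficients.
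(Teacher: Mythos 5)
The paper cites this formula to Lazo--Rathie and supplies no proof of its own, so there is no internal argument to compare against. Your derivation is correct and is the standard route: write the Wishart density for $Y = dX \sim W_n(d, I_n)$ (valid since the lemma is invoked only when $d \ge n$), account for the scaling $X = Y/d$ via the Jacobian factor $-\frac{n(n+1)}{2}\log d$ on the $n(n+1)/2$ free coordinates, use $\E[\tr Y] = nd$, and compute $\E[\log|Y|] = \sum_{i=1}^n \E[\log \chi^2_{d-i+1}] = \psi_n(d/2) + n\log 2$ via the Bartlett decomposition together with the identity $\E[\log\chi^2_k] = \psi(k/2) + \log 2$. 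Your bookkeeping of the $\log 2$ coefficient, $\frac{nd}{2} - \frac{n(d-n-1)}{2} = \frac{n(n+1)}{2}$, is right, and combining with $-\frac{n(n+1)}{2}\log d$ gives the $\log(2/d)$ in the statement. No gaps.
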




\noindent
The above two results combined give the lower bound
\begin{align}
R_X^L(D) &\ge \frac{n(n+1)}{2} \log \frac{2}{d} + \log \Gamma_n\pr{\frac{d}{2}}-\frac{d-n-1}{2} \psi_n \pr{\frac{d}{2}}+\frac{nd}{2} - \frac{n(n+1)}{4} \log \frac{4 \pi e D}{d} \notag \\
&= \frac{nd}{2} + \frac{n(n+1)}{4} \log \frac{1}{\pi e D d} + \log \Gamma_n\pr{\frac{d}{2}}-\frac{d-n-1}{2} \psi_n \pr{\frac{d}{2}} . \label{eq:slb-consequence}
\end{align}


We now analyze the functions $\Gamma_n$ and $\gamma_n$.
By Stirling's approximation for the gamma function (see Equation~6.1.40 of \cite{abramowitz1948handbook}), we have $\log \Gamma(x+1/2) \geq x \log (x+1/2) - x - 1/2 + \frac{\log(2\pi)}{2}$ for $x \ge 0$. 
Together with the definition of the multivariate gamma function $\Gamma_n$, this gives
\begin{align*}
\log \Gamma_n\pr{\frac{d}{2}} & = \frac{n(n-1)}{4}\log \pi + \sum_{i=1}^n \log \Gamma\pr{\frac{d+1-i}{2}}                                                                                    \\
& \geq \frac{n(n-1)}{4}\log \pi + \sum_{i=1}^n \left( \frac{d-i}{2} \log \frac{d+1-i}{2} - \frac{d+1-i}{2} + \frac{\log(2\pi/e)}{2} \right)                                    \\
&\ge \frac{n^2}{4}\log(\pi e) - \frac{nd}{2} + \sum_{i=1}^n \left( \frac{d-i}{2} \log \frac{d+1-i}{2} \right)  - O(n) .
\end{align*}
Moreover, by Equation (2.2) of \cite{alzer1997some}, the digamma function satisfies $\log x - \frac 1x < \psi(x) < \log x$ for $x>0$. 
Combining this with the definition of the multivariate digamma function $\psi_n$, we obtain
\begin{align*}
\frac{d-n-1}{2} \psi_n \pr{\frac{d}{2}}
&= \frac{d-n-1}{2} \sum_{i=1}^n \psi\left( \frac{d+1-i}{2} \right) \\
&\le 
\frac{d-n-1}{2} \sum_{i=1}^n \log \frac{d+1-i}{2} + O(n) ,
\end{align*}
where we note that the $O(n)$ term is only necessary in the case that $d=n$ and $\frac{d-n-1}{2}$ is negative.


Plugging the above two estimates into \eqref{eq:slb-consequence}, we see that
\begin{equation}
\label{eq:rxl-lower-step}
R_X^L(D) 
\ge \frac{n(n+1)}{4} \log \frac{1}{D d} + \sum_{i=1}^n \left( \frac{n+1-i}{2} \log \frac{d+1-i}{2} \right) - O(n) .
\end{equation}
If $d \ge 2n$, then 
\begin{align*}
R_X^L(D) 
&\ge \frac{n(n+1)}{4} \log \frac{1}{D d} + \left( \log \frac{d+1-n}{2} \right) \sum_{i=1}^n \frac{n+1-i}{2} - O(n) \\
&= \frac{n(n+1)}{4} \log \frac{1}{D} + \frac{n(n+1)}{4} \log \frac{d+1-n}{2d} - O(n) \\
&\ge \frac{n(n+1)}{4} \log \frac{1}{D} - O(n^2) .
\end{align*}
For $n \le d < 2n$, we first note that the term $\frac{n+1-i}{2} \log \frac{d+1-i}{2}$ with $i=n$ can be dropped from the sum in \eqref{eq:rxl-lower-step}, because $\frac{n+1-n}{2} \log \frac{d+1-n}{2} < 0$ only if $d=n$, in which case the negative quantity $\frac 12 \log \frac 12$ is subsumed by the $-O(n)$ term.
Furthermore, since the function $x \mapsto \frac{n+1-x}{2} \log \frac{d+1-x}{2}$ is decreasing on $[1,n]$, we have
\begin{align*}
&\sum_{i=1}^{n-1} \left( \frac{n+1-i}{2} \log \frac{d+1-i}{2} \right) 
\ge \int_1^n \frac{n+1-x}{2} \log \frac{d+1-x}{2} \, dx \\
&= \frac{2dn - d^2}{4} \log \frac{d}{d+1-n} + \frac{n^2-1}{4} \log (d + 1 - n) + O(n^2) ,
\end{align*}
where the integral can be evaluated explicitly but we suppress $O(n^2)$ terms for brevity.
Plugging this back into \eqref{eq:rxl-lower-step}, we obtain
\begin{equation*}
R_X^L(D) 
\ge \frac{n(n+1)}{4} \log \frac{1}{D} + \frac{2dn - d^2 - n^2 + 1}{4} \log \frac{d}{d+1-n} - O(n^2) .
\end{equation*}
Since $2dn - d^2 - n^2 \le 0$ and $\log \frac{d}{d+1-n} \le \frac{n-1}{d+1-n} \le \frac{n-1}{d-n}$, it holds that
$$
\frac{2dn - d^2 - n^2 + 1}{4} \log \frac{d}{d+1-n}
\ge \frac{2dn - d^2 - n^2}{4} \cdot \frac{n-1}{d-n} = -\frac 14 (d-n)(n-1) .
$$
(While the above argument relied on $d>n$ due to the presence of $d-n$ in the denominator, the conclusion clearly holds for $d=n$.)
Consequently, we again have
\begin{equation*}
R_X^L(D) 
\ge \frac{n(n+1)}{4} \log \frac{1}{D} - O(n^2) .
\end{equation*}
This readily implies the desired lower bound.

\subsection{Case $c^* n < d < n$}
\label{sec:middle d} 



This case can be easily reduced to the case $d \ge n$. 
Fix a conditional distribution $P_{Y \mid X}$ such that $\E L(X,Y) \le D$. 
Let $X_d$ be the top left $d \times d$ principal minor of $X$ and define $Y_d$ similarly.
Then $X_d$ clearly has the Wishart distribution as $X$ in Theorem~\ref{thm:wishart-rate-distortion} with $n$ replaced by $d$.
Let $L_d$ be the loss $L$ in \eqref{eq:loss-function-L} with $n$ replaced by $d$. 
Then we have
$$
L_d(X_d, Y_d) = \frac{d}{d(d+1)} \|X_d - Y_d\|_F^2 
\le \frac{d}{(c^*)^2 n(n+1)} \|X - Y\|_F^2 
= \frac{1}{(c^*)^2} L(X,Y) ,
$$
so $\E L_d(X_d,Y_d) \le D/(c^*)^2$.
Applying the result for the case $d = n$, we get
$$
I(X_d; Y_d) \ge \frac{d(d+1)}{4} \log \frac{(c^*)^2}{D} - O(d^2)
\ge \frac{c^* nd}{4} \log \frac{(c^*)^2}{D} - O(nd) .
$$
Since $I(X;Y) \ge I(X_d;Y_d)$, to complete the proof, it remains to take $D \le c$ for a sufficiently small constant $c>0$ depending only on $c^*$ and the hidden constant in $O(nd)$. 

\subsection{Spherical case}
\label{sec:spherical}
We now consider the case $Z = [z_1 \dots z_n]^\top$ and $X = Z Z^\top$ where $z_1, \dots, z_n$ are i.i.d.\ uniform random vectors over the unit sphere $\cS^{d-1} \subset \R^d$. 
The proof is via a reduction from the Gaussian case.
Let $w_1, \dots, w_n$ be i.i.d.\ $\cN(0, \frac 1d I_d)$ vectors and let $\beta_i := \|w_i\|_2$, so that $z_i = w_i/\beta_i$ and $w_i = \beta_i z_i$. 
Let $B \in \R^{n \times n}$ be the diagonal matrix with $\beta_1, \dots, \beta_n$ on its diagonal.
Let $Y = BXB$. Then $Y$ has the distribution of $X$ in the case where $z_1, \dots, z_n$ are Gaussian vectors, so the result of the Gaussian case gives
\begin{equation}
\label{eq:s1-rate-dist}
R_Y^L(D) \ge c n (n \land d) \log \frac{1}{D} .
\end{equation}

Fix a conditional distribution $P_{\hat X \mid X}$ such that $\E L(X, \hat X) \le D$. 
Let $g_1, \dots, g_n$ be i.i.d.\ $\cN(0, \delta^2)$ random variables independent from everything else, where $\delta > 0$ is to be chosen. 
Define $\hat \beta_i := \beta_i + g_i$, and let $\hat B \in \R^{n \times n}$ be the diagonal matrix with $\hat \beta_1, \dots, \hat \beta_n$ on its diagonal. 
Define $\hat Y := \hat B \hat X \hat B$. Since $z_i$ is independent from $\beta_i$, we see that $(X, \hat X)$ is independent from $(B, \hat B)$. 
Hence,
$$
I(Y; \hat Y) \le I(X,B; \hat X, \hat B) = I(X; \hat X) + I(B; \hat B). 
$$
For the term $I(B; \hat B)$, the independence across the pairs $(\beta_i, \hat \beta_i)$ for $i = 1, \dots, n$ implies 
$$
I(B; \hat B) = \sum_{i=1}^n I(\beta_i; \hat \beta_i)
= n I(\beta_1 ; \hat \beta_1). 
$$
We have $\Var(\beta_1) = \Var(\|w_i\|_2) = \frac{1}{d} (d - 2 \frac{\Gamma((d+1)/2)^2}{\Gamma(d/2)^2}) \le 1/(2d)$ using the variance of the $\chi_d$ distribution and basic properties of the gamma function. 
Let $g' \sim \cN(0,1/(2d))$.
Then the Gaussian saddle point theorem (see Theorem~5.11 of \cite{YihongITbook}) gives
$$
I(\beta_1; \hat \beta_1) \le I(g';g'+g_1) 
= \frac 12 \log \left( 1 + \frac{1}{2d\delta^2} \right)
.
$$
The above three displays combined yield
\begin{equation}
\label{eq:s2-mut-inf}
I(X; \hat X) \ge I(Y; \hat Y) - \frac n2 \log \left( 1 + \frac{1}{2d\delta^2} \right).
\end{equation}

It remains to bound $I(Y; \hat Y)$ from below. 
To this end, note that
\begin{align*}
\|\hat Y - Y\|_F^2
&= \|\hat B \hat X \hat B - BXB\|_F^2 \\
&\le 2 \|\hat B \hat X \hat B - \hat B X \hat B\|_F^2 + 2 \|\hat B X \hat B - B X B\|_F^2 \\
&= 2 \sum_{i,j=1}^n \hat \beta_i^2 \hat \beta_j^2 (\hat X_{ij} - X_{ij})^2 
+ 2 \sum_{i,j=1}^n X_{ij}^2 (\hat \beta_i \hat \beta_j - \beta_i \beta_j)^2 .
\end{align*}
Since $\hat \beta_i = \beta_i + g_i$, we have $\E[\hat \beta_i^2] = \E[\beta_i^2] + \E[g_i^2] = 1+\delta^2$.
Moreover, we have $\E[X_{ii}^2] = \E[(z_i^\top z_i)^2] = 1$ and $\E[X_{ij}^2] = \E[(z_i^\top z_j)^2] = 1/d$ for $i \ne j$. 
Finally, 
$$
\E[(\hat \beta_i \hat \beta_j - \beta_i \beta_j)^2] 
= \E[(\beta_i g_j + \beta_j g_i + g_i g_j)^2]
= 2 \delta^2 + \E[g_i^2 g_j^2] + 2 \E[\beta_i \beta_j] \E[g_i g_j] 
$$
so $\E[(\hat \beta_i^2 - \beta_i^2)^2] = 4 \delta^2 + 3 \delta^4$ and $\E[(\hat \beta_i \hat \beta_j - \beta_i \beta_j)^2] = 2 \delta^2 + \delta^4$ for $i \ne j$.
Since $\hat \beta_1, \dots, \hat \beta_n$ are independent and $B, \hat B, X$ are mutually independent, we conclude that
\begin{align*}
\E \|\hat Y - Y\|_F^2
&\le 2 (1+\delta^2)^2 \E \|\hat X - X\|_F^2 
+ 2 n (4 \delta^2 + 3 \delta^4) + 2 \frac{n(n-1)}{d} (2 \delta^2 + \delta^4) \\
&\le 8 \frac{n(n+1)}{d} D 
+ 14 \frac{n}{d} D + 6 \frac{n(n-1)}{d^2} D ,
\end{align*}
where we used that $\E L(X,\hat X) \le D$ for the loss $L$ defined in \eqref{eq:loss-function-L} and chose $\delta^2 = D/d < 1$.
Hence, we have $\E L(Y, \hat Y) \le 28 D$.
This together with \eqref{eq:s1-rate-dist} implies that 
$$
I(Y; \hat Y) \ge c n (n \land d) \log \frac{1}{28D}.
$$
Plugging this bound into \eqref{eq:s2-mut-inf}, we obtain 
$$
I(X; \hat X) \ge c n (n \land d) \log \frac{1}{28D} - \frac n2 \log \left( 1 + \frac{1}{2 D} \right) .
$$

The above bound completes the proof if $d \ge C$ for some constant $C > 0$ depending only on $c$.
For the case $d \le C$ (in fact, for the entire case $d \le c^* n$), it suffices to note that the proof in Section~\ref{sec:small d} also works for the spherical model. 
To be more precise, there are only three places where the Gaussianity assumption is used. 
First, the proof of Lemma~\ref{lem:rate-distortion-comparison} uses the orthogonal invariance of the distribution of the rows of $Z$, which is also true for the spherical model where $z_i$ is uniform over $\cS^{d-1}$. 
Second, \eqref{eq:gaussian-matrix-rate-distortion} uses the rate-distortion function of the entrywise Gaussian matrix $Z$. In the case where $Z$ have i.i.d.\ rows distributed uniformly over $\cS^{d-1}$, it suffices to replace this formula by a lower bound: By Theorems~27.17 and~24.8 of \cite{YihongITbook}, we have
$$
R_Z^{L_0}(D) \ge \frac{n(d-1)}{2} \log \frac{1}{D} - n C_2
$$
for an absolute constant $C_2 > 0$, which is sufficient for the rest of the proof.
Third, the proof of Lemma~\ref{lem:rate-distortion-rotation moduling rotation} also uses that $\E \|Z\|^2$ is of order $\frac{n+d}{d}$, which is obviously true if $d$ is of constant size and the rows of $Z$ are on the unit sphere.

\section*{Acknowledgments}
This work was supported in part by NSF grants DMS-2053333, DMS-2210734, and DMS-2338062. 
We thank Shuangping Li, Eric Ma, and Tselil Schramm for generously sharing their different approach to a similar result on random geometric graphs; the two works were developed concurrently and independently. We thank Yihong Wu and Jiaming Xu for helpful discussions on the rate-distortion theory.

\bibliographystyle{alpha}
\bibliography{IT}

\end{document}